\newtheorem{theorem}{Theorem}[section]
\newtheorem{lemma}[theorem]{Lemma}
\newtheorem{proposition}[theorem]{Proposition}
\newtheorem{corollary}[theorem]{Corollary}
\theoremstyle{definition}
\newtheorem{example}[theorem]{Example}
\newtheorem{construction}[theorem]{Construction}
\newtheorem{remark}[theorem]{Remark}
\theoremstyle{remark}
\numberwithin{equation}{section}
\def\Chi{{\mathbb X}}
\def\div{{\rm div}}
\def\quot{/\!\!/}
\def\mal{\! \cdot \!}
\def\rq#1{\widehat{#1}}
\def\b#1{\overline{#1}}
\def\CC{{\mathbb C}}
\def\KK{{\mathbb K}}
\def\ZZ{{\mathbb Z}}
\def\QQ{{\mathbb Q}}
\def\PP{{\mathbb P}}
\def\Of{{\mathcal{O}}}
\def\WDiv{\operatorname{WDiv}}
\def\Cl{\operatorname{Cl}}
\def\Spec{{\rm Spec}}
\def\SL{{\rm SL}}
\newcounter{itemnumber}
\begin{document}
\title[Categorical quotients]{Factorial algebraic group actions\\
and categorical quotients}
\author[I.~V.~Arzhantsev]{Ivan V. Arzhantsev} \address{Department of
  Higher Algebra, Faculty of Mechanics and Mathematics, Moscow State
  University, Leninskie Gory 1, Moscow, 119991, Russia}
\email{arjantse@mccme.ru}
\author[D.~Celik]{Devrim Celik} 
\address{Mathematisches Institut, Universit\"at T\"ubingen,
Auf der Morgenstelle 10, 72076 T\"ubingen, Germany}
\email{celik@mail.mathematik.uni-tuebingen.de}
\author[J.~Hausen]{J\"urgen Hausen} 
\address{Mathematisches Institut, Universit\"at T\"ubingen,
Auf der Morgenstelle 10, 72076 T\"ubingen, Germany}
\email{hausen@mail.mathematik.uni-tuebingen.de}
\subjclass[2000]{14L24, 14L30}

\begin{abstract}
Given an action of an affine algebraic 
group with only trivial characters 
on a factorial variety,
we ask for categorical quotients.
We characterize existence in the category 
of algebraic varieties.
Moreover, allowing constructible sets as quotients, 
we obtain a more general existence 
result, which, for example, settles the case of a 
finitely generated algebra of invariants.
As an application, we provide a 
combinatorial GIT-type construction of 
categorial quotients for actions on, e.g. complete
varieties with finitely generated Cox ring via lifting 
to the characteristic space.
\end{abstract}

\maketitle

\section{Introduction}

Consider the action of an affine algebraic 
group $G$ on a normal variety $X$ defined 
over an algebraically closed field $\KK$.
In most cases, the orbit space $X/G$ does not 
inherit the structure of a variety and it 
is the main task of Geometric Invariant Theory
to provide reasonable replacements.
A common concept with minimal requirements 
is the {\em categorical quotient}: this is a 
$G$-invariant morphism $\pi \colon X \to Y$ 
such that for every other $G$-invariant 
morphism $\varphi \colon X \to Z$,  
there exists a unique morphism $\psi \colon Y \to Z$ 
with $\varphi = \psi \circ \pi$.
If $G$ is reductive and $X$ is affine, then
Hilbert's finiteness theorem guarantees existence
of a categorical quotient $\pi \colon X \to Y$
with $Y := \Spec \, \Gamma(X, \mathcal{O})^G$
which, in general, is not an orbit space.
However, as soon as one of the conditions 
``$G$ reductive'' and ``$X$ affine'' is not 
satisfied, even categorical quotients need not 
exist any more, see Example~\ref{nocatquot} for 
the first and~\cite{ACHa} for the second one.
In this article, we investigate existence of 
categorical quotients in the following setting:
we say that the action of $G$ on $X$ is 
{\em factorial\/}, if every invariant 
hypersurface $D \subseteq X$ is the zero 
set of an invariant function 
$f \in \Gamma(X,\mathcal{O})$; 
compare~\cite{ReRi} for a related concept.
For example, if $G$ has trivial character 
group $\Chi(G)$, e.g., is semisimple or 
unipotent, and $X$ has finite divisor class group, 
e.g., is a vector space, then the $G$-action
is factorial.

Similarly to the reductive case, the
algebra of invariants plays a central role
in the construction of quotients, 
compare also the work on unipotent group 
actions~\cite{Fa}, \cite{Wi1}, \cite{Wi2}
and~\cite{DoKi}.
In contrast to the reductive case, even 
for affine $X$, the algebra of invariants 
need not be finitely generated. 
However, in our setting there always exist
finitely generated normal subalgebras 
$A \subseteq \Gamma(X, \mathcal{O})^G$, 
which are large in the sense that they 
have $\KK(X)^G$ as their field of fractions,
see Lemma~\ref{lem:invarfield}.
This provides at least candidates 
$\pi' \colon X \to Y'$ with $Y' := \Spec \, A$
for a quotient.
An obvious obstruction to being a categorical 
quotient is non-surjectivity; this even happens
if $X$ is affine and the algebra of invariants
is finitely generated, i.e., we may take 
$A = \Gamma(X, \mathcal{O})^G$.
In general, the image $Y = \pi'(X)$ is a 
constructible set.
This motivates an excursion to the 
category of constructible spaces, i.e., 
spaces locally isomorphic to constructible 
subsets of affine varieties, 
see Section~\ref{sec:crquots} for details 
and~\cite{BB} for a related concept.
We ask whether the map $\pi \colon X \to Y$ 
sending $x \in X$ to $\pi'(x) \in Y$ 
is a {\em categorical quotient in the category
of constructible spaces\/}, i.e., 
every $G$-invariant morphism $X \to Z$ to 
a constructible space $Z$ factors uniquely through 
$\pi \colon X \to Y$;
note that a positive answer allows in particular
to associate a unique quotient to 
the action.
Here comes our first result.

\begin{theorem}
\label{thm:quotchar}
Consider a normal variety $X$ with a factorial 
action of an affine algebraic group $G$.
Let $A \subseteq \Gamma(X, \mathcal{O})^G$
be a finitely generated normal subalgebra having
$\KK(X)^G$ as its field of fractions,
$\pi' \colon X \to Y'$, where $Y' := \Spec \, A$,
the canonical morphism and set $Y := \pi(X)$.
Then the following statements are equivalent.
\begin{enumerate}
\item
The morphism $\pi \colon X \to Y$,
$x \mapsto \pi'(x)$ 
is a categorical quotient in the 
category of constructible spaces 
for the $G$-action on $X$.
\item
The pullback 
$\pi^* \colon \Gamma(Y,\mathcal{O}) \to \Gamma(X,\mathcal{O})^G$
is an isomorphism. 
\item
There is an open subset $Y'' \subseteq Y'$ 
with $Y \subseteq Y''$ and 
$Y'' \setminus Y$ is of codimension at least two in 
$Y''$.
\end{enumerate}
Moreover, if one of these statements holds,
then $\pi \colon X \to Y$ is even 
a strong categorical quotient, 
i.e., $\pi \colon \pi^{-1}(V) \to V$
is a categorical quotient
for every open $V \subseteq Y$.
\end{theorem}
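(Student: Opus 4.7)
The plan is to prove the equivalences (i) $\Leftrightarrow$ (ii) $\Leftrightarrow$ (iii) and then to bootstrap to the strong-quotient addendum. Throughout, I use that $Y' = \Spec A$ is normal affine, that $\pi' \colon X \to Y'$ is dominant (since $A$ and $\Gamma(X,\mathcal{O})^G$ share the fraction field $\KK(X)^G$), and consequently that $Y = \pi'(X)$ is a dense constructible subset of $Y'$; every $f \in \Gamma(X,\mathcal{O})^G$ then corresponds to a rational function $\tilde f$ on $Y'$ via $\pi'^{*}\tilde f = f$.

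The implication (i) $\Rightarrow$ (ii) is easy: each $f \in \Gamma(X,\mathcal{O})^G$ is a $G$-invariant morphism $X \to \KK$ and hence factors uniquely through $\pi$ to yield $g \in \Gamma(Y,\mathcal{O})$ with $\pi^{*}g = f$, while injectivity of $\pi^*$ follows from dominance. For (ii) $\Rightarrow$ (i), a $G$-invariant morphism $\varphi \colon X \to Z$ into a constructible space is locally (in an affine chart of $Z$) determined by $\varphi^{*}\Gamma(Z,\mathcal{O}) \subseteq \Gamma(X,\mathcal{O})^G = \pi^{*}\Gamma(Y,\mathcal{O})$, which produces the factorization $Y \to Z$; one checks it respects the constructible structure using surjectivity of $\pi \colon X \to Y$. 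For (iii) $\Rightarrow$ (ii), the codimension-two hypothesis together with normality of $Y'$ gives $\Gamma(Y,\mathcal{O}) = \Gamma(Y'',\mathcal{O})$ by Hartogs extension, and identifying $\Gamma(Y'',\mathcal{O})$ with the rational functions on $Y'$ regular on $Y''$ identifies this ring with $\Gamma(X,\mathcal{O})^G$ once $Y''$ is taken to be the maximal regularity locus of all invariants.

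The main obstacle is (ii) $\Rightarrow$ (iii), where the hypothesis on $G$-invariant hypersurfaces is essential. Define $Y''$ as the largest open subset of $Y'$ on which every $\tilde f$, $f \in \Gamma(X,\mathcal{O})^G$, is regular; regularity of $f$ on $X$ combined with normality of $Y'$ and a valuative argument yields $Y \subseteq Y''$. Suppose for contradiction some prime divisor $E \subseteq Y''$ lies in $Y'' \setminus Y$. Then $E \cap Y = \emptyset$ forces $\pi'^{-1}(E) = \emptyset$. Pick $s \in A$ with $v_E(s) = 1$ and factor $\divisor(s) = E + \sum a_i E_i$ on $Y'$; the zero locus of $\pi'^{*}s$ on $X$ is the $G$-invariant hypersurface $\bigcup_i \pi'^{-1}(E_i)$, each of whose prime components is, by hypothesis, principal, cut out by some invariant $h_i \in \Gamma(X,\mathcal{O})^G$. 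Multiplying $1/\pi'^{*}s$ by a suitable monomial in the $h_i$, with exponents chosen by valuation bookkeeping on the normal variety $X$ so as to cancel every pole, produces an invariant $u \in \Gamma(X,\mathcal{O})^G$ whose associated rational function $\tilde u$ on $Y'$ retains a pole along $E$; this contradicts the definition of $Y''$.

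For the strong-quotient addendum, any open $V \subseteq Y$ equals $V'' \cap Y$ for a unique open $V'' \subseteq Y''$, and $V'' \setminus V \subseteq Y'' \setminus Y$ still has codimension at least two in $V''$. Hence (iii) applies to the restricted map $\pi \colon \pi^{-1}(V) \to V$, and the already-established (iii) $\Rightarrow$ (i) then delivers the categorical quotient property on every open.
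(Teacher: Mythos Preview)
Your sketch has the right overall shape, but two of the implications contain genuine gaps, and in each case the gap is exactly where the hypothesis on $G$-invariant hypersurfaces must do real work.

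\textbf{The implication (ii)$\Rightarrow$(i).} You write that a $G$-invariant morphism $\varphi\colon X\to Z$ is ``locally (in an affine chart of $Z$) determined by $\varphi^*\Gamma(Z,\mathcal{O})\subseteq\Gamma(X,\mathcal{O})^G$''. This is fine when $Z$ itself is quasiaffine: then pulling back global functions from an ambient affine $Z'$ lands in $\Gamma(X,\mathcal{O})^G=\pi^*\Gamma(Y,\mathcal{O})$ and yields $\psi\colon Y\to Z'$. But for a general constructible $Z$ you must cover $Z$ by quasiaffine charts $Z_i$, and then the pullbacks live in $\Gamma(X_i,\mathcal{O})^G$ with $X_i=\varphi^{-1}(Z_i)$, \emph{not} in $\Gamma(X,\mathcal{O})^G$. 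Condition~(ii) says nothing about these open pieces. The paper closes this gap precisely via the hypersurface hypothesis: one writes $X\setminus X_i = D_i\cup B_i$ with $D_i$ of pure codimension one and $B_i$ small, chooses an invariant $f_i$ with zero set $D_i$, and then obtains $\Gamma(X_i,\mathcal{O})^G=\Gamma(X,\mathcal{O})^G_{f_i}$, which \emph{does} descend to a principal open subset of $Y$. Without this step the local factorizations $Y_i\to Z_i$ cannot be produced, let alone glued.

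\textbf{The implication (ii)$\Rightarrow$(iii).} Your choice of $Y''$ as the maximal regularity locus of all invariants is self-defeating for the contradiction you seek: any $u\in\Gamma(X,\mathcal{O})^G$ you construct will, by the very definition of $Y''$, have $\tilde u$ regular along $E\subseteq Y''$. Concretely, in $\tilde u=\prod\tilde h_j^{c_j}/s$ you have $v_E(s)=1$, but nothing prevents $v_E(\tilde h_j)>0$: the zero set of $\tilde h_j$ on $Y''$ is only constrained on $Y$, and $E\subseteq Y''\setminus Y$ can perfectly well be an extra component. So the pole need not survive. The paper's argument for this implication does \emph{not} invoke the hypersurface hypothesis at all; instead it chooses, in addition to a function $f$ vanishing on the bad divisor, a second function $g\in A$ with $g$ not vanishing on that divisor but with $V(Y,f)\subseteq V(Y,g)$, and then $g^m/f$ for $m\gg 0$ gives the desired invariant with a genuine pole. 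Your valuation bookkeeping cannot replace this choice of $g$.

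A smaller point: in the addendum you invoke (iii)$\Rightarrow$(i) for an arbitrary open $V''\subseteq Y''$, but (iii) is formulated relative to an affine $Y'$. The paper handles the strong-quotient claim by first checking condition~(ii) on principal open subsets $Y_f$ and then gluing; your shortcut needs the same reduction.
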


\goodbreak

If the algebra of invariants is finitely generated,
then it obviously fulfills the second condition of 
Theorem~\ref{thm:quotchar},
and thus we obtain the following.

\begin{corollary}
\label{quotexfingen}
Consider a normal variety $X$ with a factorial 
action of an affine algebraic group $G$ and
suppose that $\Gamma(X,\mathcal{O})^G$ is finitely 
generated.
Then $\pi \colon X \to Y$, $x \mapsto \pi'(x)$,
where $\pi' \colon X \to \Spec \, \Gamma(X,\mathcal{O})^G$
is the canonical map and $Y = \pi'(X)$,
is a strong categorical quotient in the category 
of constructible spaces
for the action of $G$ on $X$.
\end{corollary}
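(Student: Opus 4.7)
The plan is to derive the corollary directly from Theorem~\ref{thm:quotchar} by verifying its condition~(ii) for the given map $\pi \colon X \to Y$. Once (ii) is checked, the theorem furnishes both the categorical-quotient property and the strong-quotient refinement, so no additional arguments are needed.

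Setting $A := \Gamma(X,\mathcal{O})^G$, I would first confirm that $Y' = \Spec A$ is a well-defined affine variety. Finite generation of $A$ is assumed, and normality of $A$ is inherited from $\Gamma(X,\mathcal{O})$: any element of $\mathrm{Frac}(A) \subseteq \KK(X)^G$ integral over $A$ is integral over $\Gamma(X,\mathcal{O})$, hence lies in $\Gamma(X,\mathcal{O})^G = A$. The canonical map $\pi'$ corresponds to the inclusion $A \hookrightarrow \Gamma(X,\mathcal{O})$ and is therefore dominant, so the image $Y = \pi'(X)$ is a dense constructible subset of $Y'$ and $\pi \colon X \to Y$ is surjective by construction.

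The remaining task is to check that $\pi^* \colon \Gamma(Y,\mathcal{O}) \to \Gamma(X,\mathcal{O})^G = A$ is an isomorphism. Injectivity is immediate: a function on $Y$ whose pullback vanishes on all of $X$ vanishes on $\pi(X) = Y$. For surjectivity, every $f \in A$ arises as $(\pi')^*(\tilde f)$ for the corresponding $\tilde f \in A = \Gamma(Y',\mathcal{O})$, and the restriction $\tilde f\vert_Y$ is a regular function on $Y$ in the sense of Section~\ref{sec:crquots} satisfying $\pi^*(\tilde f\vert_Y) = f$. Condition (ii) of Theorem~\ref{thm:quotchar} is thereby verified, and the strong-categorical-quotient conclusion follows.

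The only nuance to watch, as I see it, is using the definition of regular functions on a constructible space from Section~\ref{sec:crquots} when producing the restriction map $A \to \Gamma(Y,\mathcal{O})$; with that definition in hand the argument is a formal pullback manipulation and there is no substantial obstacle to overcome.
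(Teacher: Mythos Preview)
Your proposal is correct and follows exactly the route the paper indicates: taking $A=\Gamma(X,\mathcal{O})^G$ and verifying condition~(ii) of Theorem~\ref{thm:quotchar}, which the paper simply calls ``obvious''. You have supplied the details (normality of $A$, and the surjectivity of $\pi^*$ via restriction from $Y'$ to $Y$) that the paper leaves implicit; the only point you might add for completeness is that the standing ``largeness'' assumption $\mathrm{Frac}(A)=\KK(X)^G$ also holds here, since any $f\in\KK(X)^G$ has $G$-invariant pole divisor and hence, by the factorial-action hypothesis, becomes regular after multiplying by a suitable power of a $G$-invariant global function.
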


We come back to the problem of existence 
of quotients in the category of varieties.
An important observation is that 
$\Gamma(X,\mathcal{O})^G$ admits a 
{\em separating subalgebra\/} in the sense 
of Derksen and Kemper~\cite{DeKe}, i.e., a 
finitely generated subalgebra that separates 
any pair of points, 
which can be separated by invariant functions,
see Proposition~\ref{PDK}.
Combining this with our first result, 
we obtain the following characterization
of existence of categorical quotients.

\begin{theorem}
\label{thm:quotchar2}
Let $X$ be a normal variety with a factorial action 
of an affine algebraic group $G$.
Then the following statements are equivalent.
\begin{enumerate}
\item
There exists a categorical quotient 
$\pi \colon X \to Y$ 
in the category of varieties
for the action of $G$ on $X$.
\item
There is a finitely generated normal subalgebra
$A \subseteq \Gamma(X,\mathcal{O})^G$ 
with quotient field $\KK(X)^G$
such that the canonical map $X \to \Spec \, A$ 
has an open image.
\item
For every normal separating subalgebra 
$A \subseteq \Gamma(X,\mathcal{O})^G$
with quotient field $\KK(X)^G$,
the canonical map $X \to \Spec \, A$ 
has an open image.
\end{enumerate}
Moreover, if one of the statements holds, then 
the categorical quotient $\pi \colon X \to Y$ 
is even a strong categorical quotient.
\end{theorem}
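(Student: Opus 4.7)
The plan is to close the cycle (ii)$\Rightarrow$(i), (iii)$\Rightarrow$(ii), (ii)$\Rightarrow$(iii), (i)$\Rightarrow$(ii), by combining Theorem~\ref{thm:quotchar} with Proposition~\ref{PDK} and Zariski's Main Theorem; the ``strong'' addendum will drop out of the argument for (ii)$\Rightarrow$(i).

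For (ii)$\Rightarrow$(i), together with the strong-quotient property, I observe that with the open image $Y := \pi'(X) \subseteq Y' := \Spec A_0$ condition~(iii) of Theorem~\ref{thm:quotchar} is trivially verified by choosing $Y'' := Y$, so that $Y'' \setminus Y = \emptyset$. The theorem then produces a strong categorical quotient $\pi \colon X \to Y$ in the category of constructible spaces, which restricts to a strong categorical quotient in the full subcategory of varieties. For (iii)$\Rightarrow$(ii), Proposition~\ref{PDK} supplies a finitely generated normal separating subalgebra $A \subseteq \Gamma(X,\mathcal{O})^G$ with quotient field $\KK(X)^G$, and applying (iii) to this $A$ is precisely (ii) with $A_0 := A$.

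For (ii)$\Rightarrow$(iii), I start from the strong categorical quotient $\pi \colon X \to Y$ of the previous step, with $\pi^* \colon \Gamma(Y,\mathcal{O}) \cong \Gamma(X,\mathcal{O})^G$ by Theorem~\ref{thm:quotchar}(ii). Given any finitely generated normal separating subalgebra $A$ with quotient field $\KK(X)^G$, the universal property factors the canonical $\pi_A \colon X \to \Spec A$ as $\pi_A = \chi \circ \pi$ with $\chi \colon Y \to \Spec A$ birational (common function field $\KK(X)^G$). To check injectivity of $\chi$, suppose $\chi(y_1) = \chi(y_2)$ and pick $x_i \in \pi^{-1}(y_i)$ using surjectivity of $\pi$; then $a(x_1) = a(x_2)$ for all $a \in A$, so by the separating property $f(x_1) = f(x_2)$ for all $f \in \Gamma(X,\mathcal{O})^G$, and specialising to $A_0 \subseteq \Gamma(X,\mathcal{O})^G$ forces $\pi(x_1) = \pi(x_2)$, i.e.\ $y_1 = y_2$. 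Zariski's Main Theorem applied to the birational injective morphism $\chi$ between normal varieties then yields an open immersion, so $\pi_A(X) = \chi(Y)$ is open in $\Spec A$.

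The main obstacle is (i)$\Rightarrow$(ii), where one must build an $A_0$ from a bare categorical quotient $\pi \colon X \to Y$ in varieties. First, $\pi$ is surjective by a retraction trick: for $y_0 \in Y \setminus \pi(X)$, viewing $\pi$ as an invariant morphism to $U := Y \setminus \{y_0\}$ and applying the universal property yields a retraction $Y \to U$ of the open inclusion, contradicting $y_0 \in Y$. Applying the universal property to invariant functions viewed as morphisms $X \to \mathbb{A}^1$ also gives $\pi^* \colon \Gamma(Y,\mathcal{O}) \cong \Gamma(X,\mathcal{O})^G$. Choosing a separating $A$ from Proposition~\ref{PDK}, one forms $\chi \colon Y \to \Spec A$ and would like to repeat the injectivity argument above; this requires $\Gamma(Y,\mathcal{O})$ to separate the points of $Y$, i.e.\ that $Y$ be quasi-affine. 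Here the factoriality hypothesis must enter decisively: invariant hypersurfaces of $X$ descend to hypersurfaces on $Y$ that are cut out by elements of $\Gamma(Y,\mathcal{O})$, which should force $Y$ to carry enough global functions to embed quasi-affinely. Zariski's Main Theorem then gives $\chi$ an open immersion, so $\pi_A(X) = \chi(Y)$ is open in $\Spec A$, and (ii) follows with $A_0 := A$.
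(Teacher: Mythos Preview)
Your implications (ii)$\Rightarrow$(i), (iii)$\Rightarrow$(ii), and (ii)$\Rightarrow$(iii) are fine and match the paper's approach (the paper takes the slightly leaner route (iii)$\Rightarrow$(ii)$\Rightarrow$(i)$\Rightarrow$(iii), but the content is the same). The surjectivity of the categorical quotient via your retraction argument is also correct.

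The genuine gap is in (i)$\Rightarrow$(ii). You correctly reduce to showing that $\chi \colon Y \to \Spec A$ is injective, and you correctly reach the point where $f(x_1)=f(x_2)$ for all $f \in \Gamma(X,\mathcal{O})^G$. But from here you would need to conclude $\pi(x_1)=\pi(x_2)$, and your proposed route---proving that $Y$ is quasi-affine so that global functions separate its points---is not justified. The vague sentence ``invariant hypersurfaces of $X$ descend to hypersurfaces on $Y$ \ldots\ which should force $Y$ to carry enough global functions to embed quasi-affinely'' is not an argument, and in fact there is no a~priori reason for an abstract categorical quotient $Y$ to be quasi-affine.

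The paper resolves this with Proposition~\ref{PP}, which is exactly the missing lemma: for any separating subalgebra $A$, \emph{every} $G$-invariant morphism $X \to Z$ to a prevariety is constant along the fibres of $\pi' \colon X \to \Spec A$. Applying this with $Z=Y$ and the categorical quotient map $\pi$ itself gives $\pi(x_1)=\pi(x_2)$ directly from $\pi'(x_1)=\pi'(x_2)$, without any quasi-affineness of $Y$. The proof of Proposition~\ref{PP} is precisely where the factoriality hypothesis is used (to peel off an invariant hypersurface and localise), confirming your intuition that ``the factoriality hypothesis must enter decisively'' at this step---but via this separation lemma rather than via a structural claim about $Y$. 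With Proposition~\ref{PP} in hand you can in fact prove (i)$\Rightarrow$(iii) directly, making your separate (ii)$\Rightarrow$(iii) step redundant.
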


In the case of a finitely generated ring 
of invariants, we obtain as an immediate 
consequence the following 
characterization for existence of a 
categorical quotient.

\begin{corollary}
\label{varquotcharfingen}
Let $X$ be a normal variety with a factorial action 
of an affine algebraic group $G$
and suppose that $\Gamma(X,\mathcal{O})^G$ is finitely 
generated.
Then the following statements are 
equivalent.
\begin{enumerate}
\item
The $G$-action on $X$ has 
a categorical quotient in the 
category of varieties.
\item
The canonical morphism 
$\pi \colon X \to \Spec \, \Gamma(X, \mathcal{O})^G$
has an open image.
\end{enumerate}
Moreover, if one of these statements holds,
then $\pi \colon X \to \pi(X)$ is a categorical
quotient, and it is even a strong one. 
\end{corollary}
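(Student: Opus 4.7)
The plan is to apply Theorem~\ref{thm:quotchar2} with the specific choice $A := \Gamma(X,\mathcal{O})^G$, which is finitely generated by hypothesis. To invoke parts~(ii) and~(iii) of that theorem for this particular $A$, I would verify three properties: $A$ is normal, $A$ is separating, and $\mathrm{Quot}(A) = \KK(X)^G$. Normality follows from normality of $X$, since $\Gamma(X,\mathcal{O})$ is integrally closed in $\KK(X)$ and therefore $A$ is integrally closed in $\KK(X)^G$, in particular in $\mathrm{Quot}(A)$. The separating property is immediate because $A$ already consists of all $G$-invariant regular functions.

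Verifying $\mathrm{Quot}(A) = \KK(X)^G$ is the technical core, and here the factorial hypothesis enters essentially. Given $f \in \KK(X)^G$, I would decompose $\mathrm{div}(f) = D_+ - D_-$ into its positive and negative parts, both of which are $G$-invariant effective Weil divisors. The factorial hypothesis produces $G$-invariant regular functions on $X$ whose zero sets are the supports of $D_{\pm}$; after tracking multiplicities and, if $G$ is disconnected, passing to $G$-orbit sums of prime components, one obtains $g, h \in A$ with $f = g/h$ up to a nonzero scalar.

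With these three properties in hand, the equivalence (i)$\Leftrightarrow$(ii) of the Corollary follows directly from Theorem~\ref{thm:quotchar2}: the direction (i)$\Rightarrow$(ii) uses (i)$\Rightarrow$(iii) of the theorem applied to our $A$, while (ii)$\Rightarrow$(i) uses (ii)$\Rightarrow$(i) of the theorem with the same $A$. For the concluding assertion that $\pi \colon X \to \pi(X)$ is itself the strong categorical quotient, I would invoke Corollary~\ref{quotexfingen}, which furnishes $\pi \colon X \to \pi(X)$ as a strong categorical quotient in the category of constructible spaces; under condition~(ii) the image $\pi(X)$ is open in $\Spec A$, so it is a variety, and the unique factorisation of any $G$-invariant morphism $X \to Z$ with $Z$ a variety through $\pi$ is then automatically a morphism of varieties. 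The main obstacle will be the equality $\mathrm{Quot}(A) = \KK(X)^G$: the factorial hypothesis concerns zero sets rather than divisors, so multiplicities must be recovered from the invariant functions it supplies, and when $G$ is disconnected the $G$-invariance of $\mathrm{div}(f)$ does not force individual invariance of its prime components, requiring the argument to be carried out orbit by orbit.
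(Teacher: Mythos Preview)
Your proposal is correct and matches the paper's intended route: the corollary is stated there as an ``immediate consequence'' of Theorem~\ref{thm:quotchar2}, meaning exactly that one applies the theorem with $A=\Gamma(X,\mathcal{O})^G$ after checking normality, the separating property, and $\mathrm{Quot}(A)=\KK(X)^G$. Your use of Corollary~\ref{quotexfingen} (equivalently, the supplement of Theorem~\ref{thm:quotchar2}) for the final assertion is also what the paper has in mind.

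One remark on the point you flag as the ``main obstacle'': the quotient-field equality comes out more cheaply than your outline suggests, and the multiplicity and disconnectedness issues do not actually arise. Given $f\in\KK(X)^G$, its pole locus is a $G$-invariant closed hypersurface (or empty), so the factorial hypothesis supplies a single $h\in\Gamma(X,\mathcal{O})^G$ with $V(h)$ equal to this locus; then $fh^N\in\Gamma(X,\mathcal{O})^G$ for $N\gg 0$, and $f=(fh^N)/h^N$. There is no need to decompose $\operatorname{div}(f)$ into positive and negative parts, to match multiplicities component by component, or to pass to $G$-orbits of prime divisors.
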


For representations of unipotent groups on finite 
dimensional vector spaces, we obtain that existence 
of a categorical quotient in the category of varieties 
is equivalent to being quite close to the reductive 
case.

\begin{theorem}
\label{unipotrepquot}
Let a unipotent group~$G$ act linearly on a finite 
dimensional vector space~$V$.
Then the following statements are equivalent.
\begin{enumerate}
\item
There exists a categorical quotient in the category of
varieties for the action 
of $G$ on $V$.
\item
The algebra $\Gamma(V,\mathcal{O})^G$ of invariants is finitely 
generated and the canonical map 
$V \to \Spec \, \Gamma(V,\mathcal{O})^G$ is surjective.
\end{enumerate} 
Moreover, if one of these statements holds, 
then $V \to \Spec \, \Gamma(V,\mathcal{O})^G$ is
a strong categorical quotient in the category of
varieties for the action of $G$ on $V$.
\end{theorem}

The results presented so far
are proven in Sections~\ref{sec:crquots}
and~\ref{sec:varquots}.
In Section~\ref{sec:exs}, we discuss 
examples.
An application is given in Section~\ref{sec:GIT}.
There, we consider the action of an affine 
algebraic group $G$ with trivial character 
group $\Chi(G)$ on an, e.g. complete variety $X$ 
and assume that the Cox ring $\mathcal{R}(X)$ 
as well as the subring $\mathcal{R}(X)^G$ 
are finitely generated.
Since Cox rings are (graded) factorial~\cite{ArDeHaLa}, 
we can apply our results to the lifted action of $G$
on $\Spec \, \mathcal{R}(X)$.
Via a Gel'fand-MacPherson type correspondence,
we obtain in Construction~\ref{constr:mushroom} 
open $G$-invariant subsets $U \subseteq X$ 
with a strong categorical quotient
from geometric quotients of
a certain torus action on the factorial 
affine variety $\Spec \, \mathcal{R}(X)^G$.
Among the resulting sets $U \subseteq X$, 
there are many sets of 
{\em finitely generated semistable points\/} 
as introduced by Doran and Kirwan in~\cite{DoKi}.
They fit into a combinatorial picture given 
by the GIT-fan of a torus action on 
$\Spec \, \mathcal{R}(X)^G$.

\section{Constructible quotients}
\label{sec:crquots}

In this section, we prove Theorem~\ref{thm:quotchar}.
We begin with presenting the basic concepts concerning 
constructible spaces.

By a space with functions 
we mean a topological 
space $X$ together with a sheaf $\mathcal{O}_X$
of $\KK$-valued functions.
A~morphism of spaces $X$ and $Y$ with functions
is a continuous map $\varphi \colon X \to Y$
such that for every open subset $V \subseteq Y$
and every $g \in \mathcal{O}_Y(V)$, we have 
$g \circ \varphi \in \mathcal{O}_X(\varphi^{-1}(V))$.
If $Y \subseteq X$ is a subset of a space $X$ with 
functions, then $Y$ is in a natural manner 
a subspace with functions: 
firstly, it inherits the subspace topology 
from $X$ and, 
secondly, it inherits the sheaf 
$\mathcal{O}_Y$ of functions that are 
locally represented as restrictions of functions of 
$\mathcal{O}_X$.
A subset $Y \subseteq X$ is called constructible 
if it is a union of finitely many locally 
closed subsets.
By a constructible subspace $Y \subseteq X$, 
we mean a constructible subset $Y \subseteq X$
together with the subspace structure.
We are ready to introduce the 
category of constructible spaces.

\begin{itemize}
\item
A {\em quasiaffine constructible space} 
is a space with functions
isomorphic to a constructible subspace 
of an affine $\KK$-variety. 
\item
A {\em constructible space\/}
is a space with functions
admitting a finite cover by open 
quasiaffine constructible subspaces.
\item
A {\em morphism of constructible spaces\/}
is a morphism of the underlying 
spaces with functions.
\end{itemize}

Note that the prevarieties form a full 
subcategory of the category of 
constructible spaces.
Moreover, every constructible subset 
of a constructible space inherits 
the structure of a constructible space.
We will need the following basic observation.

\begin{lemma}
\label{lem:extfct}
Let $X'$ be a normal affine variety and
$X \subseteq X'$ a dense constructible 
subspace.
If every closed hypersurface $D \subseteq X'$ 
meets $X$, then  the restriction 
$\Gamma(X',\mathcal{O}_{X'}) \to \Gamma(X,\mathcal{O}_X)$
is an isomorphism.
\end{lemma}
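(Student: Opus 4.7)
The plan is to prove injectivity from density of $X$ in $X'$, and surjectivity by extending a given $f \in \Gamma(X,\mathcal{O}_X)$ first to a suitably large open subset $W \subseteq X'$ via gluing, and then to all of $X'$ via the normality of $X'$. Injectivity is immediate: since $X$ is dense in the (irreducible) variety $X'$, any two regular functions on $X'$ agreeing on $X$ must agree globally.

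For surjectivity, the definition of the inherited sheaf supplies, for each $x \in X$, an open neighborhood $W_x \subseteq X'$ together with a regular function $\tilde f_x \in \mathcal{O}_{X'}(W_x)$ satisfying $\tilde f_x|_{W_x \cap X} = f|_{W_x \cap X}$. The first step is to glue these local representatives to a single $\tilde f \in \mathcal{O}_{X'}(W)$ on the open set $W := \bigcup_{x \in X} W_x \supseteq X$. The key point here is that on any overlap $W_x \cap W_y$ the subset $X \cap W_x \cap W_y$ is dense: $X$ is dense in the irreducible $X'$ and therefore meets every nonempty open of $X'$. Hence $\tilde f_x$ and $\tilde f_y$ coincide on the overlap, and the gluing produces a regular extension $\tilde f$ of $f$ defined on the open set $W$.

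The second step is to note that $X' \setminus W$ is closed, disjoint from $X$, and therefore cannot contain any irreducible hypersurface of $X'$, since every such hypersurface meets $X$ by hypothesis. Consequently all irreducible components of $X' \setminus W$ have codimension at least two. Because $X'$ is normal, the algebraic Hartogs property then extends $\tilde f$ uniquely to a regular function $F \in \Gamma(X',\mathcal{O}_{X'})$, which by construction restricts to $f$ on $X$. The main obstacle is the gluing step, which is essentially a density argument combined with irreducibility of $X'$; once this is in place, the hypersurface hypothesis and normality combine almost formally via Hartogs to deliver the global extension.
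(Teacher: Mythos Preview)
Your proof is correct and follows essentially the same route as the paper's: locally extend via the definition of the inherited sheaf, glue the local extensions using density of $X$ in $X'$, and then invoke normality together with the hypersurface hypothesis to extend across the remaining codimension-$\ge 2$ locus. The paper's argument is more terse but identical in structure; your version simply makes the gluing step and the use of the hypersurface condition explicit.
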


\begin{proof}
Locally every $f \in \Gamma(X,\mathcal{O}_X)$
extends to $X'$.
Since $X \subseteq X'$ is dense, the 
local extensions can be glued together 
and thus $f$ extends to an open 
neighbourhood $X'' \subseteq X'$ of $X$.
Normality then gives the claim.
\end{proof}

Similarly one obtains that, given two constructible 
subspaces $X \subseteq X'$ and $Y \subseteq Y'$ 
of varieties $X'$ and $Y'$, every morphism $X \to Y$
extends to a morphism $U' \to Y'$ with an open 
neighbourhood $U' \subseteq X'$ of $X$.
This shows in particular that the category
of dc-subsets defined by A.~Bia\l ynicki-Birula~\cite{BB} 
is a full subcategory of the category of 
constructible spaces.

\begin{proof}[Proof of Theorem~\ref{thm:quotchar}]
In order to obtain ``(i)$\Rightarrow$(ii)'',
apply the universal property of the categorical 
quotient to $G$-invariant functions.

We verify  ``(ii)$\Rightarrow$(iii)''.
Consider $C := Y' \setminus Y$,
let $C_1, \ldots, C_r \subseteq C$ denote
the irreducible components, which are closed 
in $Y'$, and set 
$Y'' := Y' \setminus (C_1 \cup \ldots \cup C_r)$.
By Lemma~\ref{lem:extfct}, we have 
$$ 
\Gamma(Y'', \mathcal{O})
\ = \ 
\Gamma(Y, \mathcal{O}).
$$
We show that $Y'' \setminus Y$ is small.
Otherwise, let $D_1, \ldots, D_s \subseteq Y''$
be the (nonempty) collection of
prime divisors such that 
$D_i \setminus Y$ is dense in $D_i$.
Choose non-zero functions $f,g \in \Gamma(Y',\mathcal{O})$ 
with 
$$ 
D_i \ \subseteq \ V(Y'',f),
\qquad
D_i \ \not\subseteq \ V(Y'',g),
\qquad
V(Y,f) \subseteq \ V(Y,g).
$$
Then, for any $m \in \ZZ_{\ge 0}$,
the function $g^mf^{-1}$ is not regular on $Y''$
and hence not on $Y$.
On the other hand, for $m$ big enough, we have 
$m \, \div(\pi^*(g)) > \div(\pi^*f)$ and thus 
$\pi^*(g^mf^{-1})$ belongs to $\Gamma(X,\mathcal{O})^G$.
This contradicts~(ii).

We check ``(iii)$\Rightarrow$(ii)''.
Clearly, $\pi^* \colon \Gamma(Y,\mathcal{O}) \to \Gamma(X,\mathcal{O})^G$
is injective.
To see surjectivity, let $f \in \Gamma(X,\mathcal{O})^G$
be given.
Then we have $f =\pi^*g  $ with a rational function 
$g \in \KK(Y'')$.
But condition~(iii) ensures that $g$ has no poles
and thus, we have $g \in \Gamma(Y,\mathcal{O})^G$.

We show that (ii) and (iii) imply (i).
Let $\varphi \colon X \to Z$ be a $G$-invariant
morphism. 
Cover $Z$ by open subspaces $Z_1, \ldots, Z_r$ 
such that we have open embeddings $Z_i \subseteq Z_i'$ 
with affine varieties $Z_i'$.
Then $X$ is covered by the open subsets 
$X_i := \varphi^{-1}(Z_i)$, and we have 
\begin{eqnarray*}
X \setminus X_i
& = & 
D_i 
\ \cup \  
B_i,
\end{eqnarray*}
where $D_i \subset X$ is of pure codimension one,
and $B_i \subset X$ is of codimension at least two.
Choose $G$-invariant
functions $f_i \in \Gamma(X,\mathcal{O})$ having
precisely $D_i$ as their set of zeroes.
These $f_i$ descend to $Y$, and,
by Lemma~\ref{lem:extfct},
extend to $Y''$.
Set $Y_i'' := Y''_{f_i}$.
Then we have  
$$
\Gamma(X_i,\mathcal{O})^{G}
\ = \ 
\Gamma(X,\mathcal{O})_{f_i}^{G}
\ = \ 
\pi^* \Gamma(Y,\mathcal{O})_{f_i}
\ = \ 
(\pi')^*\Gamma(Y_i'',\mathcal{O}),
$$
where, for the last equality, we again
use Lemma~\ref{lem:extfct}.
As a consequence, we obtain a 
commutative diagram
$$ 
\xymatrix{
X_i 
\ar[rr]^{\varphi}
\ar[dr]_{\pi'_i}
&&
Z_i'
\\
&
Y_i''
\ar[ur]_{\psi_i'}
&
}
$$
Consider $Y_i := \pi'_i(X_i) \subseteq Y_i''$.
Then we have $Y_i = \pi(X_i)$. 
Moreover, because of 
$\psi_i'(Y_i) = \varphi(X_i) \subseteq Z_i$,
we obtain morphisms 
$\psi_i \colon Y_i \to Z_i$,
$y \mapsto \psi_i'(y)$ 
of constructible spaces.
By construction, these morphisms 
glue together
to the desired factorization 
$\psi \colon Y \to Z$.

In order to see that the categorical 
quotient $\pi \colon X \to Y$ is even strong,
first note that for every principal open subset $Y_f$ the 
restriction $\pi \colon X_{\pi^*f} \to Y_f$ is 
a categorical quotient, because 
it satisfies the second condition of the theorem.
Then the desired property is obtained by gluing.
\end{proof}

\begin{remark}
\label{rem:quaffquot2constr}
Let $G$ act on $X$ as in Theorem~\ref{thm:quotchar}.
If there is a categorical quotient 
$\pi \colon X \to Y$ with a quasiaffine 
constructible space $Y,$ then this 
quotient is obtained by the procedure of
Theorem~\ref{thm:quotchar}.
Indeed, by the universal property of a categorical 
quotient, the pullback
$\pi^* \colon \Gamma(Y,\mathcal{O}) \to \Gamma(X,\mathcal{O})^G$
is an isomorphism. 
Now choose an embedding $Y \subseteq Y'$ 
into an affine variety $Y'$.
Then $A := \pi^*\Gamma(Y',\mathcal{O})$ 
is as wanted.
\end{remark}

Note that, given a subalgebra $A$ of the 
algebra of invariants as in 
Theorem~\ref{thm:quotchar},  
the equivalent conditions of~\ref{thm:quotchar}
need not be fulfilled, see~\cite[Section~4]{Wi2}.

\section{Quotients in the category of varieties}
\label{sec:varquots}

Here, we prove Theorem~\ref{thm:quotchar2}.
A first observation is  existence of 
separating subalgebras; note that 
for affine $G$-varieties, an elementary proof 
is given in~\cite[Theorem~3.15]{DeKe}.

\begin{proposition}
\label{PDK}
Let $G$ be any affine algebraic group and 
$X$ any $G$-variety.
Then there exists a finitely generated separating 
subalgebra $A \subseteq \Gamma(X, \Of)^G$.
Moreover, if $X$ is normal and the $G$-action is
factorial, then one may choose $A$ to be normal and
to have $\KK(X)^G$ as its field of fractions.
\end{proposition}

\begin{lemma}
\label{lem:invarfield}
Let $X$ be a normal variety with a factorial 
action of an affine algebraic group $G$.
Then there is a finitely generated subalgebra 
$A \subseteq \Gamma(X,\mathcal{O})^G$ having $\KK(X)^G$ 
as its field of fractions. 
\end{lemma}

\begin{proof}
Let $\KK(X)^G = \KK(g_1, \ldots, g_r)$ with 
$g_i \in \KK(X)^G$. Then $g_i$ is defined on
an open invariant subset $U_i \subseteq X$.
By factoriality of the action, the union 
of all one-codimensional components of 
$X \setminus U_i$ is the zero set of a
function $f_i \in \Gamma(X,\mathcal{O})^G$.
Normality of $X$ implies  
$h_i := g_if_i^{m_i} \in \Gamma(X,\mathcal{O})^G$
for some $m_i > 0$.
Thus, the algebra 
$A := \KK[f_i,h_i; \; 1 \le i \le r]$ is as wanted.
\end{proof}

\begin{proof}[Proof of Proposition~\ref{PDK}]
Assume that for every finitely generated subalgebra 
$B \subseteq \Gamma(X, \Of)^G$
there exist $x_1, x_2 \in X$ such that 
$F(x_1) = F(x_2)$ for all
$F \in B$, but $f(x_1) \ne f(x_2)$ for some $f \in \Gamma(X, \Of)^G$.
Then we may construct an infinite 
strictly increasing sequence of finitely 
generated subalgebras 
$$
B_1 \ \subset \ B_2 \ \subset \ B_2 \ \subset \ldots
$$
in $\Gamma(X, \Of)^G$ such that for any 
$i \ge 1$ there exist $x_{1i}, x_{2i} \in X$
with $F(x_{1i}) = F(x_{2i})$ for all $F \in B_i$,
but $f(x_{1i}) \ne f(x_{2i})$ for some $f \in B_{i+1}$. 
This sequence of subalgebras gives us the
affine varieties $Y_i := \Spec\, B_i$ 
and the morphisms $\psi_i \colon X \to Y_i$ and 
$\varphi_i \colon Y_{i+1} \to Y_i$
defined by the inclusions $B_i \subset \Gamma(X, \Of)^G$ and
$B_i \subset B_{i+1}$. 

The images $V_i := \psi_i(X) \subseteq Y_i$ and
the maps $\varphi_i \colon V_{i+1} \to V_i$ form a dominated
inverse system of dc-subsets, see~\cite[Section~3]{BB}.
By~\cite[Theorem~O]{BB}, there exists $m \ge 1$ such that the maps
$\varphi_i \colon V_{i+1} \to V_i$ are bijective for any $i \ge m$. 
This implies that the fibers of the morphisms $\psi_i$  and 
$\psi_{i+1}$ coincide for any $i\ge m$, a contradiction.

The supplement is a simple consequence of 
Lemma~\ref{lem:invarfield}
and finite generation of the integral closure. 
\end{proof}

The basic property of a separating 
subalgebra $A \subseteq \Gamma(X,\mathcal{O})^G$ 
we will use is that it realizes the categorical 
closure of the equivalence relation given
by the $G$-action on $X$ in the following
sense.

\begin{proposition}
\label{PP}
Let $X$ be a normal variety with a factorial action 
of an affine algebraic group $G$. 
If $A \subseteq \Gamma(X,\mathcal{O})^G$ is 
a finitely generated  separating subalgebra
and $U \subseteq X$ a $G$-invariant open subset,
then every $G$-invariant morphism 
$\varphi \colon U \to Z$ to a prevariety $Z$
is constant along the fibers of the map
$\pi' \colon X \to \Spec \, A$.
\end{proposition}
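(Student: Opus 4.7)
My plan is to reduce the statement to its affine-target counterpart by covering $Z$ with affine opens $V$ and using the factoriality hypothesis together with normality of $X$ to compare $\Gamma(W,\mathcal{O})^{G}$ with $\Gamma(X,\mathcal{O})^{G}$ on preimages $W=\varphi^{-1}(V)$. I would fix $x_{1},x_{2}\in U$ with $\pi'(x_{1})=\pi'(x_{2})$; by the separating property of $A$, this already forces $f(x_{1})=f(x_{2})$ for every $f\in\Gamma(X,\mathcal{O})^{G}$, and the task is to promote this to $\varphi(x_{1})=\varphi(x_{2})$. I would then choose an affine open $V\subseteq Z$ with $z_{1}:=\varphi(x_{1})\in V$ and set $W:=\varphi^{-1}(V)\subseteq U$.

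The central algebraic step will be a description of $\Gamma(W,\mathcal{O})^{G}$ in terms of global invariants. The codimension-one components $D_{1},\ldots,D_{r}$ of $X\setminus W$ are $G$-invariant hypersurfaces, so by the factoriality hypothesis of Theorem~\ref{thm:quotchar2} there exist $g_{i}\in\Gamma(X,\mathcal{O})^{G}$ with $D_{i}=V(g_{i})$. Normality of $X$ together with the codimension $\geq 2$ property of the remaining stratum $X\setminus(W\cup\bigcup D_{i})$ lets one extend any regular function on $W$ across this stratum, and clearing poles along the $D_{i}$ by a product $\prod g_{i}^{n_{i}}$ places the result in $\Gamma(X,\mathcal{O})$. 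Applied to $\varphi^{*}(h)$ for $h\in\Gamma(V,\mathcal{O})$, this yields a presentation $\varphi^{*}(h)=F/\prod g_{i}^{n_{i}}$ with $F\in\Gamma(X,\mathcal{O})^{G}$. Since $x_{1}\in W$ forces $g_{i}(x_{1})\neq 0$, separation transports this to $g_{i}(x_{2})\neq 0$, so the rational function $\varphi^{*}(h)$ is defined at both points; applying separation to $F$ and to each $g_{i}$ then gives $\varphi^{*}(h)(x_{1})=\varphi^{*}(h)(x_{2})$ for every $h\in\Gamma(V,\mathcal{O})$.

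As $h$ varies these equal values assemble into a morphism $\hat\varphi\colon X^{\circ}\to V$ extending $\varphi|_{W}$, with $\hat\varphi(x_{1})=z_{1}$ and $\hat\varphi(x_{2})=z_{1}$, where $X^{\circ}:=X\setminus\bigcup D_{i}$. Composing with $V\hookrightarrow Z$ gives a morphism $U\cap X^{\circ}\to Z$ which agrees with $\varphi|_{U\cap X^{\circ}}$ on the dense open $W$; since $Z$ is separated, the two coincide on $U\cap X^{\circ}$, and evaluating at $x_{2}$ gives $\varphi(x_{2})=\hat\varphi(x_{2})=z_{1}=\varphi(x_{1})$, as desired.

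I expect the main obstacle to be the structural step that writes each $\varphi^{*}(h)$ as a global invariant divided by a product of the $g_{i}$: factoriality is needed both to realise the codimension-one part of $X\setminus W$ as a zero set of $G$-invariants and to rule out further codimension-one contributions, while normality handles the extension across the codimension $\geq 2$ stratum. Once this structural fact is in place, the separating property of $A$ supplies the comparison of values at $x_{1}$ and $x_{2}$ essentially for free.
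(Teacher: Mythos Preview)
Your argument is correct and follows essentially the same route as the paper: pass to an affine open in the target, decompose the complement of its preimage into a divisorial part cut out by global $G$-invariant functions plus a part of codimension $\ge 2$, and use normality to extend. The paper argues contrapositively and decomposes $U\setminus\varphi^{-1}(Z_1)$ rather than $X\setminus\varphi^{-1}(V)$; as written this leaves it with a separating function only in $\Gamma(U,\mathcal{O})^G$, so your choice to take the complement in $X$ is actually a bit cleaner, landing directly in $\Gamma(X,\mathcal{O})^G$ where the separating hypothesis on $A$ applies. One caveat: at the end you invoke that $Z$ is separated, whereas the statement says ``prevariety''; the paper's proof uses the same hypothesis implicitly (to deduce $U_1=U\setminus D_1$ from the extension of $\varphi_1$), so this is a feature of the proposition rather than a defect of your proof.
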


\begin{proof}
Consider $x_1, x_2 \in U$ with 
$\varphi(x_1) \ne \varphi(x_2)$. 
Let $Z_1 \subseteq Z$ be an open 
affine neighbourhood of $\varphi(x_1)$. 
Set $U_1 := \varphi^{-1}(Z_1)$, and 
write $U \setminus U_1 = D_1 \cup  B_1$,
where $D_1 \subset U$ is of pure 
codimension one, and $B_1 \subset U$ 
is of codimension at least two.
Then the  morphism $\varphi_1 \colon U_1 \to Z_1$
extends to a morphism 
$\varphi_1 \colon U \setminus D_1 \to Z_1$.
Consequently, we must have
$U_1 = U \setminus D_1$.

Choose a function $f_1 \in \Gamma(X,\mathcal{O})^G$ 
having inside $U$ precisely $D_1$ 
as its set of zeroes.
If $x_2 \in D_1$ holds,
then we obtain $f_1(x_2) = 0$
and $f_1(x_1) \ne 0$. 
If $x_2 \in U_1$ holds,
then there is a function 
$f \in \Gamma(U_1, \Of)^G$ with
$f(x_1) \ne f(x_2)$. 
Since $\Gamma(U_1, \Of)^G$ equals $\Gamma(U, \Of)_{f_1}^G$,
we find a function 
$f' \in \Gamma(U, \Of)^G$ with $f'(x_1) \ne f'(x_2)$.
\end{proof}

\begin{proof}[Proof of Theorem~\ref{thm:quotchar2}]
The implication ``(iii)$\Rightarrow$(ii)'' follows 
from Proposition~\ref{PDK}.
Moreover, ``(ii)$\Rightarrow$(i)''
and the supplement are clear by 
Theorem~\ref{thm:quotchar}.
To verify ``(i)$\Rightarrow$(iii)'',
let $\pi \colon X \to Y$ be a categorical 
quotient.
Given any normal separating subalgebra
$A \subseteq \Gamma(X,\mathcal{O})^G$
as in~(iii),
the universal property yields 
a commutative diagram
$$ 
\xymatrix{
X
\ar[rr]^{\pi'}
\ar[dr]_{\pi}
& &
{\Spec \, A}
\\
&
Y
\ar[ur]_{\psi}
}
$$
By assumption, the morphism 
$\psi \colon Y \to \Spec \, A$ 
is birational.
Moreover, using surjectivity of the 
categorical quotient $\pi \colon X \to Y$ 
and Proposition~\ref{PP}, we see that
it is injective.
Consequently,
since $\Spec \, A$ is normal, Zariski's
Main Theorem yields that 
$\psi \colon Y \to \Spec \, A$ 
is an open embedding.
Using once more surjectivity of
$\pi \colon X \to Y$,
we conclude that $\pi'(X) = \psi(Y)$ 
is open in $\Spec \, A$.
\end{proof}

Every constructible subspace $X \subseteq X'$ 
of a quasiaffine variety has an open kernel,
i.e., a unique maximal subset, which is open 
in the closure of $X$ in $X'$.
This kernel does not depend on the 
embedding $X \subseteq X'$. 
Thus, given an arbitrary constructible space $X$, 
we can define the {\em set of varietic points\/} 
$X^{\rm var} \subseteq X$ as the union of the 
open kernels of its quasiaffine open subspaces.
Note that $X^{\rm var} \subseteq X$ is the 
unique maximal open subspace of $X$, 
which is a prevariety. 
Based on this observation, we obtain
a statement on categorical quotients 
similar to Rosenlicht's theorem on 
existence of an open subset with 
a geometric quotient.

\begin{corollary}
\label{maxopen}
Let $X$ be a normal variety with a factorial action 
of an affine algebraic group $G$.
Then there is a unique maximal invariant 
open subset $U \subseteq X$ 
that admits a categorical quotient 
$\pi \colon U \to V$ in the category 
of varieties.
\end{corollary}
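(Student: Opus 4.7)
The plan is to exhibit $U$ concretely. By Proposition~\ref{PDK}, I fix a finitely generated normal separating subalgebra $A \subseteq \Gamma(X,\mathcal{O})^G$ with quotient field $\KK(X)^G$, write $\pi' \colon X \to Y' := \Spec \, A$ for the canonical morphism, and let $V' \subseteq Y'$ denote the largest open subset of $Y'$ contained in $\pi'(X)$. I claim that $U := (\pi')^{-1}(V')$, which is $G$-invariant, open, and satisfies $\pi'(U) = V'$, is the required maximal open subset.

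For the existence of the quotient on $U$, I would cover $V'$ by principal open affines $Y'_{f_i}$ with $f_i \in A$ and consider the saturated subsets $U_i := (\pi')^{-1}(Y'_{f_i}) \subseteq U$. Since $Y'_{f_i} \subseteq V' \subseteq \pi'(X)$, the restriction $\pi'|_{U_i} \colon U_i \to Y'_{f_i} = \Spec \, A_{f_i}$ is surjective, so the finitely generated normal subalgebra $A_{f_i} \subseteq \Gamma(U_i,\mathcal{O})^G$ has open image and quotient field $\KK(X)^G$. By Theorem~\ref{thm:quotchar2}, (ii)$\Rightarrow$(i), each $U_i$ then carries a strong categorical quotient $\pi_i \colon U_i \to Y'_{f_i}$ in the category of varieties, realised by $\pi'|_{U_i}$. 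On the overlaps $U_i \cap U_j = (\pi')^{-1}(Y'_{f_i f_j})$ the strong-quotient property, together with the uniqueness clause in the universal property of a categorical quotient, forces the local factorisations to agree, and the $\pi_i$ glue to a strong categorical quotient $\pi \colon U \to V'$ in the category of varieties.

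For maximality, let $\tilde U \subseteq X$ be any $G$-invariant open subset admitting a categorical quotient in varieties. The factoriality hypothesis writes $X \setminus \tilde U = \tilde D \cup \tilde B$ with $\tilde D$ pure of codimension one and cut out by some $\tilde f \in \Gamma(X,\mathcal{O})^G$, and $\tilde B$ of codimension at least two. Normality of $X$ then yields $\Gamma(\tilde U, \mathcal{O})^G = \Gamma(X,\mathcal{O})^G_{\tilde f}$, so $A_{\tilde f}$ sits inside $\Gamma(\tilde U, \mathcal{O})^G$ as a finitely generated normal subalgebra with quotient field $\KK(X)^G$. I would then check that $A_{\tilde f}$ is separating on $\tilde U$: two points of $\tilde U$ agreeing on $A$ must agree on every invariant in $\Gamma(X,\mathcal{O})^G$ by the separating property of $A$ on $X$, and since $\tilde f$ is among these invariants and nonvanishing on $\tilde U$, they also agree on every quotient $F/\tilde f^n \in \Gamma(\tilde U, \mathcal{O})^G$. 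Invoking Theorem~\ref{thm:quotchar2}, (i)$\Rightarrow$(iii), for $\tilde U$ with the subalgebra $A_{\tilde f}$ then shows that $\pi'(\tilde U)$ is open in $Y'_{\tilde f}$, hence in $Y'$, and is contained in $\pi'(X)$; so $\pi'(\tilde U) \subseteq V'$ and $\tilde U \subseteq U$.

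The main obstacle is this maximality step. Its success depends on the descent of the separating property along the restriction $X \supseteq \tilde U$, which in turn rests on the factoriality hypothesis: it realises the invariants on $\tilde U$ as a single localisation of the invariants on $X$, and thereby re-expresses the open-image criterion of Theorem~\ref{thm:quotchar2} for $\tilde U$ as an open-image statement inside the fixed spectrum $Y'$, where it can be directly compared with $V'$.
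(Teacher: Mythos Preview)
Your argument is correct and follows the same architecture as the paper's proof: fix a finitely generated normal separating subalgebra $A$, take $U$ to be the $\pi'$-preimage of the open kernel of $\pi'(X)$ in $Y'=\Spec A$ (this is exactly the set $Y^{\rm var}$ the paper uses), establish existence via Theorem~\ref{thm:quotchar2}, and deduce maximality from the implication (i)$\Rightarrow$(iii) of that theorem applied to an arbitrary $\tilde U$.

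The paper's execution is more direct in two places. For existence, there is no need to cover $V'$ by principal opens and glue: Theorem~\ref{thm:quotchar2}\,(ii)$\Rightarrow$(i) applies to $U$ itself with the \emph{same} subalgebra $A\subseteq\Gamma(U,\mathcal{O})^G$, since $\pi'\vert_U\colon U\to Y'$ already has open image $V'$. For maximality, instead of decomposing $X\setminus\tilde U$, passing to the localisation $A_{\tilde f}$, and re-verifying separation by hand, the paper invokes Proposition~\ref{PP} once to see that $A$ remains separating on \emph{every} invariant open subset of $X$; then (i)$\Rightarrow$(iii) applied to $\tilde U$ with $A$ itself gives $\pi'(\tilde U)$ open in $Y'$ immediately. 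Your localisation step is not wrong, but it is reproving a special case of Proposition~\ref{PP}, and the detour through $A_{\tilde f}$ is unnecessary since the conclusion you need concerns openness in $Y'=\Spec A$ anyway.
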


\begin{proof}
Let $A \subseteq \Gamma(X, \mathcal{O})$
be a finitely generated normal separating 
subalgebra.
Then, by Proposition~\ref{PP},
this is a separating subalgebra for any 
invariant open subset of $X$.
Now, set $Y' := \Spec \, A$, let 
$\pi' \colon X \to Y'$ be the canonical
morphism and set $Y := \pi'(X)$.
Then Theorem~\ref{thm:quotchar2} tells 
us that  $U := \pi'^{-1}(V)$ for $V := Y^{\rm var}$
has a categorical quotient in the category of 
varieties.
If another $G$-invariant open set $W \subseteq X$ 
admits a categorical quotient in the category
of varieties,
then Theorem~\ref{thm:quotchar2} yields that 
$\pi'(W)$ is open in $Y'$ and hence 
$\pi'(W) \subseteq V$ holds.
This implies $W \subseteq U$.
\end{proof}

\begin{proof}[Proof of Theorem~\ref{unipotrepquot}]
The supplement and the implication ``(ii)$\Rightarrow$(i)'' are 
direct consequences of Corollary~\ref{varquotcharfingen}.

Now assume that (i) holds. Then Theorem~\ref{thm:quotchar2} provides a normal 
separating subalgebra 
$A \subseteq \Gamma(V,\mathcal{O})^G$ with quotient field $\KK(X)^G$.
Clearly, we may assume that~$A$ is generated by homogeneous polynomials,
i.e.~is a graded subalgebra of $\mathcal{O}(V)$.
Then $\KK^*$ acts on $Y = \Spec \, A$ and $\pi \colon V \to Y$ becomes
equivariant.
The image $\pi(V) \subseteq Y$ is invariant and, according to 
Theorem~\ref{thm:quotchar2}, open in $Y$. Since we have $A_0 = \KK$, we can 
conclude $\pi(V) = Y$.

By~\cite[Sec.~3]{Du},
there are $f_1,\ldots,f_r \in A$  
with $A_{f_i} = \Gamma(V,\mathcal{O})^G_{f_i}$ such that 
the zero set $V_V(f_1,\ldots,f_r)$ is of codimension at least 
two in $V$ and the pullback homomorphism
$$
\pi^* \colon 
\mathcal{O}(Y \setminus V_Y(f_1,\ldots,f_r))
\ \to \ 
\Gamma(V,\mathcal{O})^G
$$ 
is an isomorphism. 
As seen before, $\pi \colon V \to Y$ is surjective.
Consequently, 
the zero set $V_Y(f_1,\ldots,f_r)$ is of codimension at least 
two in $Y$.
Since $Y$ is normal, this implies  $\Gamma(V,\mathcal{O})^G = A$
which finally gives (ii).
\end{proof}

\section{Examples}
\label{sec:exs}

Our first example is an action of the 
additive group $\KK$ on a 
four-dimensional vector space having 
a finitely generated algebra of 
invariants but no categorical quotient 
in the category of varieties.

\begin{example}
\label{nocatquot}
See~\cite[Section~4.3]{PoVi} 
and~\cite[Example~6.4.10]{FSRi}.
We regard $X := \KK^4$ as the 
space of $(2\times 2)$-matrices
and consider the action of the 
additive group $G = \KK$ given by
$$
\lambda \cdot 
\left(
\begin{array}{cc}
a & b \\
c & d
\end{array}
\right)
\ = \
\left(
\begin{array}{cc}
1 & \lambda \\
0 & 1
\end{array}
\right)
\left(
\begin{array}{cc}
a & b \\
c & d
\end{array}
\right)
\ = \
\left(
\begin{array}{cc}
a + \lambda c & b + \lambda d \\
c & d
\end{array}
\right). 
$$
This action fulfills the assumptions 
of Theorem~\ref{thm:quotchar}.
The algebra of invariants 
is generated by $c$, $d$ and $ad-bc$.
The corresponding morphism 
$\pi' \colon \KK^4 \to \KK^3$ has 
the non-open image 
$$
Y \ = V \ \cup \ \{(0,0,0)\},
\qquad
V \ := \ 
\KK^* \times \KK \times \KK \ \cup \  \KK \times \KK^* \times \KK.
$$
According to Corollary~\ref{varquotcharfingen},
there is no categorical quotient in the category 
of varieties.
However, by Corollary~\ref{quotexfingen} the map
$\pi \colon X \to Y$, $x \mapsto \pi'(x)$ 
is a strong categorical quotient in the category of 
constructible spaces.
Moreover the set $U \subseteq X$ of Corollary~\ref{maxopen}
is $\pi^{-1}(V)$.
\end{example}

By a result of Sumihiro, every free torus action 
on a variety admits a geometric quotient with 
a possibly non-separated orbit space.
The following example shows that this is not true for 
actions of the additive group $\KK$, even if 
they admit a categorical quotient in the category 
of constructible spaces.

\begin{example}
See~\cite[Section~5]{Wi1}.
Consider the (non-linear) action of
the additive group $G = \CC$ on 
$X = \CC^4$ defined by
\begin{eqnarray*}
\lambda 
\cdot 
(x_1,\, x_2,\, x_3,\, x_4) 
& := & 
(x_1, \, 
x_2 +  \lambda x_1, \, 
x_3 + \lambda x_2  + \frac{1}{2}\lambda^2x_1, \, 
x_4  + \lambda (x_2^2  -  2x_1x_3-1)).
\end{eqnarray*}
Then this action is free, and, according to~\cite[Lemma~10]{Wi1}
the algebra of invariants is generated by
$$
f_1 \ := \ x_1, 
\qquad 
f_3 \ := \ x_1x_4 -  x_2(x_2^2  -  2x_1x_3-1),
$$
$$
f_2 \ := \ x_2^2  - 2x_1x_3,  
\qquad 
f_4 \, := \, \frac{1}{f_1}(f_3^2 \, - \, f_2(1 \, - \, f_2)^2).
$$
The variety $Y' = \Spec \, \Gamma(\CC^4, \Of)^G
= V(\CC^4; f_1f_4 - f_3^2 + f_2(1  -  f_2)^2)$
is smooth, and the image of the canonical morphism 
$\pi' \colon \CC^4 \to Y'$ is
\begin{eqnarray*}
Y 
& = &
Y'  \ \setminus \
\{f_1 = 0, \, f_2 = 1, \, f_3 = 0, \, f_4  \ne  0\}.
\end{eqnarray*}
Thus, Theorem~\ref{thm:quotchar} says that 
$\pi \colon X \to Y$, $x \mapsto \pi'(x)$ 
is a categorical quotient in the category
of constructible spaces.
Since $\pi$ does not separate the orbits 
of the points $(0,1,0,0)$ and $(0,-1,0,0)$,
a geometric quotient cannot exist, even if we allow 
a non-separated orbit space.
\end{example}

So far, we saw examples of unipotent group actions 
having no categorical quotients in the category 
of varieties.
Here comes a semisimple group action on a smooth 
quasiaffine variety.

\begin{example}
Let $V$ be the space of $(2\times 3)$-matrices
with the $\SL(2)$-action by left multiplication. The algebra
of invariants is generated by $(2\times 2)$-minors
$\Delta_{12}, \Delta_{23}, \Delta_{13}$, and the canonical morphism
$$
\pi' \colon  V \ \to \ \KK^3, \qquad 
 \ M \to (\Delta_{12}(M), \, \Delta_{23}(M), \, \Delta_{13}(M))
$$
is surjective. Consider the open
invariant subset $X \subset V$ consisting of matrices
with non-zero first column. 
It has the same algebra of invariants as $V$.
However, by Corollary~\ref{varquotcharfingen}, it has 
no categorical quotient, because the image 
$Y = \pi'(X) \subset \KK^3$ is not open: it is 
given by
$$
(\KK^3 \setminus V( \KK^3 ; \, \Delta_{12}, \, \Delta_{13})) 
\ \cup \ 
\{(0,0,0)\}.
$$  
\end{example}

We now provide a class of examples, showing that the conditions 
of Theorem~\ref{thm:quotchar2} may be fulfilled even without
finite generation of the ring of invariants.

\begin{example}
Let $F$ be a connected simply connected semisimple 
algebraic group and $G \subseteq F$ a closed subgroup
with $\Chi(G) = 0$, and let $G$ act on $F$ by 
multiplication from the right.
Then, in general, $\Gamma(F, \Of)^G$
is not finitely generated.
Choose any finitely generated normal subalgebra 
$A \subseteq \Gamma(F, \Of)^G$ having $\KK(F)^G$ 
as its field of fractions and being 
invariant with respect to the $F$-action by 
multiplication from the left.
Then the morphism 
$\pi' \colon F \to Y' := \Spec \, A$ is 
$F$-equivariant and its image coincides 
with an open $F$-orbit.  
\end{example}

The next example shows that without 
the assumption of a ``factorial action'',
even a surjective morphism
$\pi' \colon X \to \Spec \, \Gamma(X, \Of)^G$ 
need not be a categorical quotient.

\begin{example}
Consider the action of the additive group 
$G=\KK$ on the smooth quasiaffine variety
\begin{eqnarray*}
X 
& = &
V(\KK^4; \; x_1x_4 - x_2x_3) \ \setminus \,\{(0,0,0,0)\}  
\end{eqnarray*}
given by
\begin{eqnarray*}
\lambda \cdot  (x_1, x_2, x_3, x_4) 
& := &
(x_1, x_2, x_3 + \lambda x_1, x_4  + \lambda x_2).
\end{eqnarray*}
The algebra of invariants is generated 
by $x_1$ and $x_2$, and the canonical morphism
$\pi' \colon X \to  \Spec \, \Gamma(X, \Of)^G$ 
is surjective. However, the following 
$G$-invariant morphism does not factor 
through $\pi'$:
$$
X \ \to \ \PP_1, 
\qquad \qquad 
x \ \mapsto \ [x_1, x_2] \ = \ [x_3, x_4].
$$
\end{example}

Finally, we give an example without quotient in the 
category of varieties, where we don't know, if it 
has a quotient in the category of constructible 
spaces:
  
\begin{example}
Fix a number $m \in \ZZ_{\ge 2}$ and 
consider the action of the additive 
group $G = \CC$ on  $X = \CC^7$ given by
\begin{eqnarray*}
\lambda \cdot (x, y, z, s, t, u, v) 
& := &  
(x, y,z, s + \lambda x^{m+1},  
t + \lambda y^{m+1}, 
u + \lambda z^{m+1}, 
v + \lambda x^my^mz^m).
\end{eqnarray*}
As observed in~\cite{AC}, the algebra of invariants is
Roberts' algebra~\cite{Ro}; in particular, 
it is not finitely generated. 
By~\cite[Lemma~2]{Ro}, any non-constant term of a 
$G$-invariant polynomial 
contains at least one of the variables $x, y$ and $z$. 
Let
$$
f_1 = x, \ 
f_2 = y, \
f_3 = z, \ 
f_4, \  \ldots, \ f_n
\ \in \ 
\Gamma(X,\mathcal{O})^G
$$
generate a normal separating subalgebra 
and suppose that none of the $f_i$ 
has a constant term. 
Consider the morphism $\pi' \colon \CC^7 \to \CC^n$
given by 
\begin{eqnarray*}
(x, y, z, s, t, u, v) 
& \mapsto & 
(x, y, z, f_4(x,y,z,s,t,u,v), \ldots, f_n(x,y,z,s,t,u,v)).
\end{eqnarray*}

\goodbreak 

We claim that the image $Y = \pi'(\CC^7)$ is not 
open in its closure.
Otherwise, it were a 6-dimensional variety. 
But if we restrict the projection 
$$
r  \colon  \CC^n \ \to \CC^3, 
\qquad (x, y, z, \dots) \ \mapsto \ (x, y, z)
$$
to $Y$, then the preimage $r^{-1}(0,0,0)$ intersected with
$Y$ is just one point; a constradiction to 
semicontinuity of the fiber dimension. 
Thus, by Theorem~\ref{thm:quotchar2},
there is no categorical quotient in the category of varieties.
\end{example}

\section{A combinatorial GIT-type construction}
\label{sec:GIT}

Let $G$ be an affine algebraic group with 
trivial character group $\Chi(G)$.
We consider an action of $G$ on a $\QQ$-factorial
variety $X$ with  $\Gamma(X,\mathcal{O}^*) = \KK^*$
and finitely generated divisor class group
$\Cl(X)$.
For simplicity, let us assume that $\Cl(X)$ 
is free, though m.m. everything works as well
if torsion appears.
Our aim is to present a construction of open
$G$-invariant subsets $U \subseteq X$ that admit 
a strong categorical quotient $U \to Y$.
Passing, if necessary, to the action of 
the simply connected covering group, we may assume 
that $G$ itself is simply connected.

The idea is to lift the $G$-action to the
characteristic space over $X$ and then reduce 
the problem to 
the case of a torus action on an affine variety 
by means of the results obtained so far. 
More precisely, the procedure is the following;
we refer to~\cite{ArDeHaLa} for details.
Choose any subgroup $K \subseteq \WDiv(X)$ of the 
group of Weil divisors projecting isomorphically 
onto the divisor class group $\Cl(X)$ 
and define a sheaf of 
$K$-graded $\mathcal{O}_X$-algebras by
\begin{eqnarray*}
\mathcal{R}
& := & 
\bigoplus_{D \in K} \mathcal{O}_X(D).
\end{eqnarray*}
Then the $K$-grading of $\mathcal{R}$ 
defines an action of the torus 
$H := \Spec \, \KK[K]$
on the relative spectrum 
$\rq{X} := \Spec_X \mathcal{R}$ 
and the canonical morphism 
$p \colon \rq{X} \to X$ is a geometric quotient for 
this action.
We call $p \colon \rq{X} \to X$ the characteristic space
over $X$; for smooth $X$, we obtain the well-known 
universal torsor.
Using $G$-linearization of the homogeneous components
of $\mathcal{R}$, we may lift the 
$G$-action to $\rq{X}$ such that it commutes 
with the $H$-action and $p \colon \rq{X} \to X$
becomes $G$-equivariant, see~\cite[Section~4]{ArHa}.

The variety $\rq{X}$ is quasiaffine and 
the Cox ring 
$\mathcal{R}(X) = \Gamma(\rq{X},\mathcal{O})$ 
is factorial.
In particular, the $G$-action on $\rq{X}$ 
satisfies the assumptions of Theorems~\ref{thm:quotchar}
and~\ref{thm:quotchar2}.
Suppose that the Cox ring 
$\mathcal{R}(X)$ 
and the algebra of invariants 
$\mathcal{R}(X)^G$ 
are finitely generated.
This gives us factorial affine varieties
$$
\b{X} \ := \ \Spec \, \mathcal{R}(X),
\qquad\qquad
\b{Y}' \ := \ \Spec \, \mathcal{R}(X)^G,
$$
see~\cite[Theorem~3.17]{PoVi}.
The variety $\rq{X}$ is an $(G \times H)$-invariant open 
subset of $\b{X}$ and,
by Corollary~\ref{quotexfingen},
there is a strong categorical quotient
$\kappa \colon \b{X} \to \b{Y}$ 
with a constructible subset 
$\b{Y} \subseteq \b{Y}'$ such that 
$\b{Y}' \setminus \b{Y}$ is of codimension at
least two.
Moreover, since 
$\mathcal{R}(X)^G$ is $K$-graded,
the $H$-action on $\b{X}$ 
descends to an $H$-action on $\b{Y}'$
leaving $\b{Y}$ invariant.

\begin{construction}
\label{constr:mushroom}
Let $\rq{V}' \subseteq \b{Y}'$ be an 
$H$-invariant open subset with
$\kappa^{-1}(\rq{V}') \subseteq \rq{X}$
admitting a good quotient 
$q' \colon \rq{V}' \to V'$ 
for the action of $H$.
Set $\rq{V} :=  \b{Y} \cap \rq{V}'$
and suppose we have ($*$): for each
$v \in V := q(\rq{V})$,
the closed $H$-orbit of $q'^{-1}(v)$ 
lies in~$\rq{V}$.
Then $U := p(\rq{U})$,
where $\rq{U} := \kappa^{-1}(\rq{V})$,
is open in $X$,
admits a strong categorical quotient 
$r \colon U \to V$  for the action of $G$
in the category of constructible spaces
and $U$ is covered by $r$-saturated 
affine open subsets.
For convenience, we summarize the data in 
a commutative diagram:
$$ 
\xymatrix{
{\b{X}}
\ar@/^3pc/[rrrrrr]^{\kappa}
&
{\rq{X}}
\ar[d]_{p}
\ar@{}[l]|{\supseteq}
& 
{\rq{U}}
\ar[d]_{p}
\ar@{}[l]|{\supseteq}
\ar[rr]^{\kappa}
& & 
{\rq{V}}
\ar@{}[r]|{\subseteq}
\ar[dd]^{q}
&
{\rq{V}'}
\ar@{}[r]|{\subseteq}
\ar[dd]^{q'}
&
{\b{Y}'}
\\
&
X
& 
U
\ar@{}[l]|{\supseteq}
\ar[d]_{r}
& & 
&
\\
&
& 
V
\ar@{=}[rr]_{\cong}
& &
V 
\ar@{}[r]|{\subseteq}
&
V'
}
$$
\end{construction}

\begin{lemma}
\label{lem:good2constr}
Let a reductive group $H$ act
on a normal variety $\rq{V}'$ 
with  good quotient 
$q' \colon \rq{V}' \to V'$
and let 
$\rq{V} \subseteq \rq{V}'$ be an $H$-invariant 
constructible subset.
If $\rq{V}' \setminus \rq{V}$ is of codimension 
at least two in $\rq{V}'$ and for every 
$v \in V := q'(\rq{V})$ the closed $H$-orbit
of $q'^{-1}(v)$ lies in $\rq{V}$,
then $q \colon \rq{V} \to V$, $x \mapsto q'(x)$ 
is a strong categorical quotient 
for the action of $H$ on $\rq{V}$
in the category of constructible spaces.
\end{lemma}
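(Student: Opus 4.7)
The proof will construct, given an $H$-invariant morphism $\varphi\colon\rq{V}\to Z$, the required factorization $\psi\colon V\to Z$ in two stages: first set-theoretically via the closed orbits, and then as a morphism by combining the categoricity of $q'$ with Lemma~\ref{lem:extfct} applied to a suitable GIT-style saturation of $\rq{V}$ inside $\rq{V}'$. Uniqueness of $\psi$ is immediate from surjectivity of $q$.

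For the set-theoretic part, set $\psi(v):=\varphi(x_0)$ for any $x_0\in O_v$, where $O_v$ denotes the unique closed $H$-orbit of $q'^{-1}(v)$; this is well defined because $O_v\subseteq\rq{V}$ by hypothesis and $\varphi$ is $H$-invariant. To see $\psi\circ q=\varphi$, note that for any orbit $O\subseteq q'^{-1}(v)$ with $O\cap\rq{V}\neq\emptyset$ one has $O_v\subseteq\overline{O}$; combining continuity of $\varphi$ with the fact that $\KK$-points of a constructible space are closed (locally the space is a subspace of an affine variety over an algebraically closed field) forces $\varphi(x)=\varphi(O_v)=\psi(q(x))$ for every $x\in O$.

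To upgrade $\psi$ to a morphism, cover $Z$ by open quasiaffine subspaces $Z_i\subseteq Z'_i$ with $Z'_i$ affine, set $\rq{V}_{(i)}:=\varphi^{-1}(Z_i)$, and write $\rq{V}_{(i)}=U_i\cap\rq{V}$ for an $H$-invariant open $U_i\subseteq\rq{V}'$. Consider the largest $q'$-saturated open subspace of $U_i$,
$$\rq{V}''_i \;:=\; \rq{V}'\setminus q'^{-1}\bigl(q'(\rq{V}'\setminus U_i)\bigr),$$
whose image $V''_i:=q'(\rq{V}''_i)$ is open in $V'$ because $q'$ sends $H$-invariant closed sets to closed sets. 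Two geometric checks are the crux: (i) $V''_i=\{v\in V':O_v\subseteq U_i\}$, so $V_i:=\psi^{-1}(Z_i)=V\cap V''_i$ is open in $V$; (ii) $\rq{V}_{(i)}\subseteq\rq{V}''_i$, hence $\rq{V}''_i\setminus\rq{V}_{(i)}\subseteq\rq{V}'\setminus\rq{V}$ has codimension at least two in $\rq{V}''_i$. Both rest on the observation that $O_v\subseteq\overline{O}$ for every orbit $O\subseteq q'^{-1}(v)$: for (i) this implies that $O_v\subseteq U_i$ if and only if the entire fibre lies in $U_i$; for (ii) one concludes $O_v\subseteq\rq{V}_{(i)}$ whenever $x\in\rq{V}_{(i)}$ with $v=q'(x)$, because $\psi(v)=\varphi(x)\in Z_i$. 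Covering $V''_i$ by affine opens and pulling back through the affine good quotient $q'$, Lemma~\ref{lem:extfct} yields $\mathcal{O}(\rq{V}''_i)=\mathcal{O}(\rq{V}_{(i)})$; taking $H$-invariants and using $\mathcal{O}(V''_i)=\mathcal{O}(\rq{V}''_i)^H$, the pullback $\varphi^*\colon\mathcal{O}(Z'_i)\to\mathcal{O}(\rq{V}_{(i)})^H$ descends to a morphism $V''_i\to Z'_i$ whose restriction to $V_i$ lands in $Z_i$ and agrees with $\psi|_{V_i}$. Glueing over the cover $\{Z_i\}$ produces the morphism $\psi\colon V\to Z$.

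The strong-quotient assertion follows by applying the same construction to the restriction of $q$ over any open subset $V^\circ\subseteq V$, which inherits the two standing hypotheses. The main obstacle I anticipate is establishing properties (i) and (ii) of the saturation $\rq{V}''_i$ simultaneously; once they are in place, the remainder of the argument runs along the same lines as the proof of Theorem~\ref{thm:quotchar}.
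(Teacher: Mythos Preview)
Your argument is correct, but it takes a different, more constructive route than the paper's.

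The paper's proof is brief and conceptual: it observes that $q\colon\rq{V}\to V$ inherits from the good quotient $q'$ the two structural properties that force categoricity, namely that $V$ carries the quotient topology with respect to $q$ (because $V'$ does with respect to $q'$, and the closed-orbit hypothesis makes the identification of open subsets of $V$ with $q$-saturated open subsets of $\rq{V}$ go through), and that the canonical map $\mathcal{O}_V\to q_*\mathcal{O}_{\rq{V}}^H$ is an isomorphism (using normality of $\rq{V}'$ and the codimension-two hypothesis). From these two facts the factorization of any $H$-invariant morphism follows formally, without ever covering the target $Z$.

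Your approach instead follows the template of the proof of Theorem~\ref{thm:quotchar}: you cover $Z$ by quasiaffine constructible charts, pull back, build the $q'$-saturation $\rq{V}''_i$ of each piece, and then invoke Lemma~\ref{lem:extfct} over an affine cover to extend functions across the codimension-two locus and descend through the good quotient. The verifications (i) and (ii) that you flag as the crux are indeed the content of the closed-orbit hypothesis, and your handling of them is sound. What you gain is an explicit construction of $\psi$ that makes each step checkable; what the paper gains is a proof that isolates exactly which properties of a good quotient survive the passage from $\rq{V}'$ to the constructible subspace~$\rq{V}$, making the ``strong'' assertion an immediate consequence of the sheaf-theoretic formulation rather than a separate localization argument.
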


\begin{proof}
Let $\varphi \colon \rq{V} \to Z$ be any 
$H$-invariant morphism to a constructible 
space.
By assumption, we have $\varphi = \psi \circ q$ 
with a set-theoretical map $\psi \colon V \to Z$.
In order to see that this map is a morphism
note that firstly $V$ carries the 
quotient topology with respect to
$q \colon \rq{V} \to V$, because $V'$ 
carries the quotient topology 
with respect to $q' \colon \rq{V}' \to V'$,
and secondly, that due to the fact that 
$\rq{V}' \setminus \rq{V}$ is of codimension 
at least two, the canonical morphism 
$\mathcal{O}_V \to q_* \mathcal{O}_{\rq{V}}^H$
is an isomorphism.
Clearly, the arguments work as well locally
with respect to $V$, and thus 
we have even a strong categorical quotient.
\end{proof}

\begin{proof}[Proof of Construction~\ref{constr:mushroom}]
Since $q \colon \rq{V}' \to V'$ is a 
good quotient, the set $\rq{V}'$
is covered by $q$-saturated affine open 
subsets, and these are of the form 
$\b{Y}'_{g_i}$.
Thus, $\rq{U} := \kappa^{-1}(\rq{V}')$ 
is covered by the $q \circ \kappa$-saturated 
open subsets $\b{X}_{f_i}$, where 
$f_i := \kappa^*(g_i)$.
Since $\rq{U}$ is $(G \times H)$-invariant, 
its image $U = p(\rq{U})$ is open 
and $G$-invariant. Moreover,
$U$ is covered by the $G$-invariant 
affine open subsets $U_i := p(\b{X}_{f_i})$.
Since $p \colon \rq{U} \to U$ is a 
categorical quotient, 
we have an induced morphism 
$r \colon U \to V$.
By Lemma~\ref{lem:good2constr}
and Theorem~\ref{thm:quotchar},
this is a strong categorical
quotient.
Moreover, by construction,
the sets $U_i$ give the desired
$r$-saturated affine covering.
\end{proof}

Now, in addition to the assumptions 
made so far, let $X$ be projective.
For every Weil divisor $D \in K$,
we may define the associated set of 
semistable points
$X^{ss}(D)$ as the union of all 
the affine sets $X_f$, where 
$n > 0$ and $f \in \mathcal{R}(X)^G_{nD}$.
Then, for any ample divisor $D \in K$,
we have 
$$
X^{ss}(D)
\ = \
p(\kappa^{-1}(\b{Y}'^{ss}(D))),
\qquad\qquad
\b{Y}'^{ss}(D)
\ := \ 
\bigcup_{{f \in \mathcal{R}(X)^G_{nD}} \atop {n > 0}} \b{Y}'_f.
$$
Note that due to our finiteness assumptions 
on the Cox ring $\mathcal{R}(X)$ 
and the ring $\mathcal{R}(X)^G$ of invariants,
the set $X^{ss}(D)$ coincides with the set 
of finitely generated semistable points 
introduced in~\cite[Definition~4.2.6]{DoKi}.
Applying Construction~\ref{constr:mushroom}
shows existence of a categorial quotient.

\begin{corollary}
Let $D \in K$ and suppose that 
$\rq{V}' = \b{Y}'^{ss}(D)$ satisfies 
Condition~($*$) of~\ref{constr:mushroom},
e.g., all points of $\rq{V}'$ 
are stable.
Then  there is a strong categorical 
quotient $X^{ss}(D) \to V$ for the $G$-action,
where $V = q(\b{Y} \cap \rq{V}')$
and $q \colon  \rq{V}' \to \rq{V}' \quot H$
is the good quotient.
\end{corollary}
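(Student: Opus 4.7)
The plan is to reduce the statement to a single application of Construction~\ref{constr:mushroom} with $\rq{V}' := \b{Y}'^{ss}(D)$. With this choice, the sets $\rq{V} = \b{Y} \cap \rq{V}'$ and $V = q(\rq{V})$ literally match the data in the statement, and, because $\kappa(\b{X}) = \b{Y}$, the preimage $\rq{U} := \kappa^{-1}(\rq{V})$ coincides with $\kappa^{-1}(\rq{V}')$; so the formula $X^{ss}(D) = p(\kappa^{-1}(\b{Y}'^{ss}(D)))$ recalled just above the corollary gives $U = p(\rq{U}) = X^{ss}(D)$. It therefore suffices to verify the three structural inputs of the construction, Condition~($*$) being part of the hypothesis.

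The first input, that $\rq{V}'$ be an $H$-invariant open subset of $\b{Y}'$, is immediate: it is a union of principal opens $\b{Y}'_f$ with $K$-homogeneous $f \in \mathcal{R}(X)^G_{nD}$. The third input, existence of a good quotient $q' \colon \rq{V}' \to V'$ for the $H$-action, also requires no new work: since $D \in K$ is a character of the Cox torus $H$ and $\rq{V}'$ is by definition the classical semistable locus on the affine variety $\b{Y}'$ with respect to this character, standard torus GIT furnishes $q'$, with $V' = \Proj \bigoplus_n \mathcal{R}(X)^G_{nD}$.

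The hard part, I expect, is the second input $\kappa^{-1}(\rq{V}') \subseteq \rq{X}$; this is exactly where ampleness of $D$ plays a role. I would reduce it to the local statement $\b{X}_f \subseteq \rq{X}$ for each $f \in \mathcal{R}(X)^G_{nD}$. For such an $f$, ampleness of $D$ ensures that the basic open $X_f \subseteq X$ is affine, so its pre\-image $p^{-1}(X_f)$ is an affine open subset of $\rq{X} \subseteq \b{X}$ with ring of functions $\mathcal{R}(X)_f$. Since $\b{X}_f = \Spec \, \mathcal{R}(X)_f$ carries the same ring of global functions and $\b{X}$ is normal, the two affine open subvarieties of $\b{X}$ must coincide, forcing $\b{X}_f \subseteq \rq{X}$. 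Finally, the remark about stability is a one-liner: if every point of $\rq{V}'$ is stable then each fibre of $q'$ is a single closed $H$-orbit, which, upon meeting the $H$-invariant set $\rq{V}$, lies entirely inside it; so Condition~($*$) is automatic, and Construction~\ref{constr:mushroom} delivers the asserted strong categorical quotient $r \colon X^{ss}(D) \to V$.
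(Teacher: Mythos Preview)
Your proof is correct and follows the paper's approach: the paper treats the corollary as a direct application of Construction~\ref{constr:mushroom} (the sentence ``Applying Construction~\ref{constr:mushroom} shows existence of a categorical quotient'' immediately preceding the statement is the entire argument), while you spell out the verification of its hypotheses---$H$-invariance and openness of $\rq{V}'$, the inclusion $\kappa^{-1}(\rq{V}')\subseteq\rq{X}$ via ampleness of $D$, existence of the good $H$-quotient, and the stability remark. The identification $U=X^{ss}(D)$ and the crucial inclusion $\b{X}_f\subseteq\rq{X}$ are precisely the content of the displayed formula for ample $D$ just above the corollary, so you have supplied exactly the details the paper leaves implicit.
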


Now one may apply the combinatorial description of 
GIT-equivalence for torus actions on factorial affine 
varieties, see~\cite[Section~3]{ArHa}, 
to the action of $H$ on $\b{Y}'$,
and thus compute the variation of the 
Doran-Kirwan GIT-quotients. 
We demonstrate this by means of the 
following example.

\begin{example}
Compare also~\cite[Example~4.1.10]{DoKi}.
Consider the action of the additive group 
$G = \KK$ on $X = \PP_1 \times \PP_1$
given by 
\begin{eqnarray*}
\lambda \cdot ([a,c], [b,d])
& := &
([a+\lambda c,c], [b + \lambda d, d]). 
\end{eqnarray*} 
We have an obvious lifting of the action 
to the characteristic space $\rq{X}$:
The extension to $\b{X} = \KK^2 \times \KK^2$
was discussed in Example~\ref{nocatquot},
we have $\b{Y}' = \KK^3$ and the quotient map is
$$ 
\pi' \colon \b{X} \ \to \ \b{Y}',
\qquad
((a,c), (b,d)) \ \mapsto \ (c,d,ad-bc).
$$
The image is 
$\b{Y} = 
\KK^* \times \KK \times \KK
\cup 
\KK \times \KK^* \times \KK
\cup
\{(0,0,0)\}
$.
Now, the torus $H$ is $\KK^* \times \KK^*$ and
it acts on $\b{X}$ via 
\begin{eqnarray*}
(t_1,t_2) \cdot ((a,c),(b,d))
& = & 
((t_1a,t_1c),(t_2b,t_2d)).
\end{eqnarray*}
The induced $H$-action on $\b{Y}'$ 
is given by 
$t \mal (u,v,w) = (t_1u,t_2v,t_1t_2w)$.
Its GIT-fan in $\Chi(H) = \ZZ^2$ looks like
\begin{center}
\begin{picture}(0,0)%
\includegraphics{p1p1hirz.pstex}%
\end{picture}%
\setlength{\unitlength}{1243sp}%
\begingroup\makeatletter\ifx\SetFigFontNFSS\undefined%
\gdef\SetFigFontNFSS#1#2#3#4#5{%
  \reset@font\fontsize{#1}{#2pt}%
  \fontfamily{#3}\fontseries{#4}\fontshape{#5}%
  \selectfont}%
\fi\endgroup%
\begin{picture}(1866,1866)(2218,-1444)
\end{picture}%

\end{center}
The two full-dimensional chambers correspond via 
Construction~\ref{constr:mushroom} to the two
sets $U_1 := \PP_1 \times \KK$ and $U_2 := \KK \times \PP_1$ 
of semistable points.
Both of them have a strong categorical quotient 
$U_i \to \PP_1$ in the category of varieties.   
\end{example}


\begin{thebibliography}{}%
%
\bibitem{AC}
A.~A'Campo-Neuen:
Note on a counterexample to Hilbert's fourteenth problem given by P.~Roberts.  
Indag. Math. (N.S.) 5 (1994), no.~3, 253--257.
%
\bibitem{ACHa}
A.~A'Campo-Neuen, J.~Hausen: 
Examples and counterexamples for existence of categorical quotients.  
J. Algebra 231 (2000), no.~1, 67--85.
%
\bibitem{ArDeHaLa} 
I.~Arzhantsev, U.~Derenthal, J.~Hausen, A.~Laface:
Cox rings, arXiv:1003.4229, see also the authors' 
webpages.
%
%
\bibitem{ArHa}
I.V.~Arzhantsev, J.~Hausen: 
Geometric Invariant Theory via Cox rings.  
J.~Pure Appl. Algebra 213 (2009), no.~1, 154--172.
%
%
\bibitem{BB}
A.~Bialynicki-Birula:
Categorical quotients.  
J. Algebra  239 (2001), no.~1, 35--55.
%
%
\bibitem{DeKe}
H.~Derksen, G.~Kemper:
Computational Invariant Theory.
Invariant Theory and Algebraic Transformation Groups, I, 
Encyclopaedia of Math. Sciences, 130, 
Springer-Verlag, Berlin, 2002 
%
\bibitem{DoKi}
B.~Doran, F.~Kirwan:
Towards non-reductive geometric invariant theory.  
Pure Appl. Math. Q. 3 (2007), no.~1, part~3, 61--105.
%
\bibitem{Du}
E.~Dufresne: Finite separating sets and quasi-affine quotients.
arXiv:1102.2132v1 (2011).
%
\bibitem{Fa}
A.~Fauntleroy:
Categorical quotients of certain algebraic group actions. 
Illinois J. Math. 27 (1983), no.~1, 115--124. 
%
\bibitem{FSRi}
W.~Ferrer Santos, A.~Rittatore:
Actions and invariants of algebraic groups. 
Pure and Applied Mathematics (Boca Raton), 269. 
Chapman and Hall/CRC, Boca Raton, FL, 2005
%
\bibitem{PoVi}
V.L.~Popov, E.B.~Vinberg: 
Invariant Theory. 
In Algebraic Geometry IV, 137--314, Moscow: VINITI 1989 (Russian); 
English transl.: Algebraic Geometry IV, Encyclopaedia Math. Sci., 
vol.~55, 123--284, Springer-Verlag, Berlin,  1994
%
\bibitem{ReRi}
L.~Renner, A.~Rittatore:
Observable actions of algebraic groups.  
Transform. Groups  14  (2009),  no. 4, 
985--999. 
%
\bibitem{Ro}
P.~Roberts:
An infinitely generated symbolic blow-up 
in a power series ring and a new counterexample 
to Hilbert's fourteenth problem. 
J.~Algebra 132 (1990), no.~2, 461--473. 
%
\bibitem{Wi1}
J.~Winkelmann: 
On free holomorphic $\bf C$-actions on $C\sp n$ 
and homogeneous Stein manifolds.  
Math. Ann. 286 (1990), no.~1-3, 593--612.
%
\bibitem{Wi2}
J.~Winkelmann:
Invariant rings and quasiaffine quotients.  
Math. Z. 244 (2003), no.~1, 163--174.
%
\end{thebibliography}
\end{document}